\mathchardef\colon="303A  
\mathchardef\gt="313E  
\mathchardef\lt="313C  
\theoremstyle{definition}
\newtheorem{deff}{Definition}[section]
\newtheorem{prop}[deff]{Proposition}
\newtheorem{lem}[deff]{Lemma}
\newtheorem{Remark}[deff]{Remark}
\date{}
\begin{document}
\title{Remarks on the Tripos To Topos Construction: extensionality, comprehensions, quotients and cauchy-complete objects}
\author{Fabio Pasquali\footnote{Part of the research in this paper was carried out while the author worked at Utrecht University in the NWO-Project `The Model Theory of Constructive Proofs' nr.  613.001.007.}}\maketitle

\begin{abstract}
We give a description of the tripos-to-topos constructions in terms of four free constructions. We prove that these compose up to give a free construction from the category of triposes and logical morphisms to the category of toposes and logical functors. Then we show that other similar constructions, i.e. the one given by Frey in \cite{frey} and that of Carboni in \cite{carbons} are instances of this one.
\end{abstract}

\section*{Introduction}\label{sec0}
One of the most relevant feature one meets in dealing with the theory of triposes is the Tripos To Topos construction. The Martin Hyland effective topos and the topos of sheaves over a locale, although they have quite different properties, e.g the former is not Grothendieck, they both are relevant instances of that construction \cite{tripinret}. The construction was presented in \cite{Tripos}, where triposes were suitable Sets-indexed collections of Heyting algebras, and Pitts, in his PhD thesis, generalized the theory to triposes over an arbitrary base \cite{PittsPhd}.\\
Loosely speaking, the Tripos To Topos construction can be seen as a way to add to a tripos exactely those properties that one needs to obtain a subobjects tripos of an elementary topos. Those properties are: having comprehensions (in the sense of Lawvere),  quotients, an extensional equality and cauchy-complete base (definitions are given in the notes). It turns out that each one of those properties can be freely add to a tripos, providing (by composition) an adjoint situation between $\mathcal{L}$\textbf{Tripos}, the category of triposes and logical functors, and its subcategory on triposes with comprehensions, quotients, extensional equality and a cauchy-complete base. Moreover the latter is equivalent to $\mathcal{L}$\textbf{Topos}, the category of elementary toposes and logical functors. In other words there exists an adjoint situation
\[
\xymatrix{
\mathcal{L}\textbf{Tripos}\ar@/^/[rr]&&\mathcal{L}\textbf{Topos}\ar@/^/[ll]\ar@{}[ll]|{\bot}
}
\]
in which the right adjoint is the obvious forgetful functor, while the reflector is the functor that freely adds the four properties listed above.
\section{Regular doctrines and Triposes}\label{sec1}
This section contains the basic definitions regarding doctrines and triposes and categories of these, see \cite{tripinret} for details.\\

Let \textbf{ISL} be the category of inf-semilattices and homomorphisms.

\begin{deff}\label{def1}
A doctrine is a pair $(\C, P)$ where $\C$ is a non-empty category with binary products and $P$ a functor$$P:\C^{op}\arr \textbf{ISL}$$
\end{deff}
We shall refer to $\C$ as the base category of the doctrine. We will often write $f^*$ instead of $P(f)$ to denote the action of the functor $P$ on the morphism $f$ of $\C$ and we shall call it reindexing along $f$. Binary meets in inf-semilattices are denoted by $\wedge$. Elements in $P(A)$ will often be called formulas over $A$ and the top element is denoted by $\top_A$.
\begin{deff}\label{def2}
A doctrine is \textbf{regular} if for each arrow $f : X \arr Y$ in $\C$ there exists a functor $\exists_{f} \dashv f^*$ satisfying
\begin{itemize}
\item[-]Beck-Chevalley condition: i.e. for every pullback of the form 
\[\xymatrix{
X\ar[d]_-{g}\ar[r]^-{f}&Y\ar[d]^{h}\\
Z\ar[r]_-{k}&W
}\]
it holds that $\exists_f\circ g^*= h^*\circ\exists_k$
\item[-]Frobenius Reciprocity: i.e. $\exists_f (\alpha\wedge f^*\beta) = \exists_f \alpha \wedge \beta$.
\end{itemize}
\end{deff}
We denote by \textbf{RD} the category of regular doctrines and regular functors:
the \textbf{objects} are regular doctrines and
the \textbf{arrows} are pairs $(F,f)$
$$
\xymatrix@R=2.0ex@C=2.5ex{
\C^{op}\ar[rrd]^-{P}="a"\ar@/_1pc/ [dd]_{F^{op}}="c"&&\\
&&\textbf{ISL}\\
\mathbb{D}^{op}\ar[rru]_-{R}="b"&&
\ar@/_1pc/"a";"b"_{f}}
$$
where the functor $F:\C\arr\D$ preserves binary products and $f$ is a natural transformation from the functor $P$ to the functor $R\circ F$ which commutes with left adjoints, i.e. it preserves the regular structures ($\exists$, $\wedge$, $\top$ and $\delta$).
\begin{deff}
A regular doctrine $(\C,P)$ is a \textbf{tripos} if
\begin{itemize}
\item[-] $P$ factors through the category of Heyting algebras and homomorphisms
\item[-] for every morphism $f$ in $\C$ the functor $f^*$ has a right adjoint $\forall_f$ satisfying Beck-Chevalley condition
\item[-] for every $X$ in $\C$, there exists $\mathbb{P}X$ in $\C$ and a formula $\in_X$ in $P(X\times \mathbb{P}X)$ such that for every object $Y$ in $\C$ and formula $\gamma$ in $P(X\times Y)$ there exists $\{\gamma\}:Y\arr \mathbb{P}X$ such that $(id_X\times \{\gamma\})^*\in_X = \gamma$.
\end{itemize}
\end{deff}
We shall abbreviate with $\delta_A$ the formula $\exists_{<id_A,id_A>}\top_A$ of $P(A\times A)$ and call it internal equality predicate over $A$. We say that $\mathbb{P}X$ is a \textbf{weak} power object of $X$, moreover we say that a tripos has \textbf{strong} power objects if, for every $A$ in $\C$, there exists an  object $\mathbb{P}A$ in $\C$ such that $\mathbb{P}A$ is a weak power object of $A$ and $$\delta_{\mathbb{P}A} = \forall_{<\pi_2,\pi_3>}(<\pi_1,\pi_2>^*\in_A\sse<\pi_1,\pi_3>^*\in_A)$$
We call \textbf{LT} the subcategory of \textbf{RD} of triposes and logical functors: the objects are triposes and the arrows are those arrows $(F,f)$ of \textbf{RD} such that $f$ is an homomorphism of Heyting algebras which commutes with left and right adjoints, i.e. it preserves all the first order structure ($\forall$, $\exists$, $\imply$, $\wedge$, $\lor$, $\top$, $\bot$ and $\delta$) and $F$ preserves weak power objects, i.e. $F$ maps a weak power object of $A$ into a weak power object of $FA$.
\section{Comprehensions}\label{sec2}
We recall in this section some known constructions involving the notion of comprehensions and related properties. 
\begin{deff}
A doctrine $(\C,P)$ is said to have \textbf{comprehensions} if for every object $A$ of $\C$ and formula $\alpha$ over $A$, there exists a moprhism $\lfloor \alpha \rfloor:X\arr A$ such that $\lfloor \alpha \rfloor^*\alpha = \top_X$ and for every $f:Y\arr A$ with $f^*\alpha = \top_X$ there exists a unique morphism $h:Y\arr X$ with $\lfloor \alpha \rfloor \circ h = f$. Moreover $\lfloor \alpha \rfloor$ is said to be \textbf{full} if for every formula $\beta$ over $A$, $\lfloor \alpha \rfloor^*\alpha \le \lfloor \alpha \rfloor^*\beta$ if and only if $\alpha\le\beta$.
\end{deff}
We denote by \textbf{RD}$_{(c )}$ the subcategory of \textbf{RD} whose objects are regular doctrines with full comprehensions and morphisms are pairs $(F,f)$ of \textbf{RD} in which $F$ preserves comprehensions. The inclusion of \textbf{RD}$_{(c )}$ in \textbf{RD} has a left adjoint $\textbf{c}:\textbf{RD}\arr\textbf{RD}_{(c )}$.\\\\ We briefly recall it from \cite{RM}. Given a regular doctrine $(\C,P)$, we denote the free regular doctrine with full comprehensions by $(\C_c,P_c)$. $\C_c$ is the category whose objects are pairs $(A,\alpha)$ in which $\alpha$ is an element of $P(A)$, while a morphism $f: (A,\alpha)\arr (B,\beta)$ is a morphism $f:A\arr B$ in $\C$ such that $\alpha \le f^*\beta$. Identity and composition are those of $\C$.\\\\ The functor $P_c$ is defined by the following assignment on objects of $\C_c$ $$P_c(A,\alpha) = \{\phi\ \epsilon \ P(A)\ |\ \phi \le \alpha\}$$ and by the following assignment on morphisms $f:(A,\alpha) \arr (B,\beta)$ of $\C_c$ $$P_c(f)(\psi) = f^*\psi \wedge \alpha$$
It is straightforward to see that $P_c(A,\alpha)$ is an infsemilattice: if $\phi\le \alpha$ and $\psi\le\alpha$, then $\phi \wedge \psi\le\alpha$, while the top element is $\alpha$ itself.\\\\
The unite of the adjunction is the family of morphisms $(H,\eta)_P:(\C,P)\arr(\C_c,P_c)$ where $H:\C\arr\C_c$ sends every object $A$ to $(A,\top_A)$ and acts as the identity on morphisms, while $\eta$ is the family of identity homomorphisms between inf-semilattices, since $P_c(A,\top_A) = P(A)$.\\

The left adjoint restricts to the inclusion of \textbf{LT}$_{(c )}$ into \textbf{LT} where the former is category of triposes with full comprehensions and logical morphisms preserving them. It is straightforward to see that for $\phi$ a formula over $(A,\alpha)$, the functor $\psi \mapsto (\phi\imply\psi) \wedge \alpha$ from $P_c(A,\alpha)$ to itself is right adjoint to $\psi\mapsto \psi \wedge \phi$. Moreover given $f:(A,\alpha)\arr(B,\beta)$, the right adjoint to $P_c(f)$ is $\forall_f (\alpha \imply -) \wedge \beta$.\\\\ Finally the following assignments $$\mathbb{P}(A,\alpha) = (\mathbb{P}(A), \forall_{\pi_2}(\in_A\imply \pi_1^*\alpha))$$$$\in_{(A,\alpha)} = \in_A  \wedge\ \pi_2^*\forall_{\pi_2}(\in_A\imply \pi_1^*\alpha))$$ determine a weak power object of $(A,\alpha)$, making $(\C_c,P_c)$ a tripos.\\\\ It is immediate to see that $(H,\eta)_P:P\arr P_c$ is a logical morphism by replacing, in the assignments above, $\alpha$ and $\beta$ with $\top_A$ and $\top_B$ respectively. Moreover if $(F,f):(\C,P)\arr (\D,R)$ is logical, its unique extension $(\overline{F}, \overline{f}): (\C_c,P_c)\arr(\D,R)$ is also logical. Then the restriction of the functor \textbf{c} to \textbf{LT}, which we denote also by \textbf{c}, fits in the following commutative diagram 
\[
\xymatrix{
\textbf{RD}\ar@/^/[r]^-{\textbf{c}}&\textbf{RD}_{(c )}\ar@/^/[l]\ar@{}[l]|{\bot}\\
\textbf{LT}\ar@{^(->}[]+<0ex,2.5ex>;[u]\ar@/^/[r]^-{\textbf{c}}&\textbf{LT}_{(c )}\ar@{^(->}[]+<0ex,2.5ex>;[u]\ar@/^/[l]\ar@{}[l]|{\bot}
}
\]
\section{Quotients}\label{sec3}
Before recalling from \cite{RM2} the definition of quotients, we need the notion of equivalence relation in a regular doctrine.\\\\
In a given regular doctrine $(\C,P)$, an equivalence relation $\rho$ over an object $A$ of $\C$ is a formula over $A\times A$ such that the following conditions hold
\begin{itemize}
\item[]$\delta_A\le \rho$
\item[]$\rho = <\pi_2,\pi_1>^*\rho$
\item[]$<\pi_1,\pi_2>^*\rho\wedge<\pi_2,\pi_3>^*\rho\le<\pi_1,\pi_3>^*\rho$
\end{itemize}
\begin{deff}
A regular doctrine $(\C,P)$ is said to have \textbf{quotients} if for every $A$ in $\C$ and every equivalence relation $\rho$ over $A$ there exists a morphism $q:A\arr A/\rho$ such that $\rho \le (q\times q)^*\delta_{A/\rho}$ and for every morphism $f:A\arr Y$ such that $\rho \le (f\times f)^*\delta_Y$ there exists a unique $h:A/\rho\arr Y$ with $h\circ q = f$. $A/\rho$ is said to be an \textbf{effective} quotients if $\rho = (q\times q)^*\delta_{A/\rho}$. 
\end{deff}
We denote by \textbf{RD}$_{(c,q)}$ the subcategory of \textbf{RD}$_{(c )}$ whose object are regular doctrines with full comprehensions and effective quotients and morphisms are those morphisms of \textbf{RD}$_{(c )}$ which preserve quotients.\\\\
Maietti and Rosolini in \cite{RM2} proved that the inclusion of  \textbf{RD}$_{(c,q)}$ in \textbf{RD}$_{(c )}$ has a left adjoint $\textbf{q}: \textbf{RD}_{(c )}\arr \textbf{RD}_{(c,q )}$. Given a regular doctrine $(\C,P)$ we shall denote the free regular doctrine with effective quotients by $(\C_q,P_q)$. $\C_q$ is the category whose objects are pairs $(A,\rho)$ in which $A$ is in $\C$ and $\rho$ is an equivalence relation on $A$. An arrow $f: (A,\rho)\arr(B,\sigma)$ is an arrow $f:A\arr B$ in $\C$ such that $\rho \le (f\times f)^*\sigma$. The functor $P_q$ is determined by the assignment $$P_q(A,\rho) = \{\phi\ \epsilon\ P(A)\ |\ \pi_1^*\phi\wedge\rho\le\pi_2^*\phi\}$$and by $P_q(f)=f^*$. The unite of the adjunction is the family $(\nabla, \zeta)_P:(\C,P)\arr(\C_q,P_q)$ where $\nabla$ is the identity on morphisms and sends an object $A$ of $\C$ to $(A,\delta_A)$ in $\C_q$, while $\zeta$ is the family of identity homomorphisms, since $P_q(A,\delta_A) = P(A)$.\\

As before, the left adjoint restricts to the inclusion of \textbf{LT}$_{(c,q)}$ into \textbf{LT}$_{(c )}$, where the former is the category of triposes with full comprehensions and effective quotients and morphisms are those morphisms of \textbf{LT}$_{(c )}$ which preserve quotients. The proof that the above quotients completion preserves rights adjoints ($\imply$ and $\forall$) is in \cite{RM2}. If $\mathbb{P}(X)$ is a weak power object of $X$ in $\C$, then a power objects of $(X,\rho)$ in $\C_q$ is $$(\mathbb{P}(X), \forall_{<\pi_2,\pi_3>} (<\pi_1,\pi_2>^*\in_X\sse<\pi_1,\pi_3>^*\in_X))$$while$$\in_{(X,\rho)}\ =\ \in_X \wedge\ \forall_{<\pi_2,\pi_3>}(<\pi_1,\pi_2>^*\rho\imply<\pi_1,\pi_3>^*\in_X)$$
Moreover $\nabla\mathbb{P}(X)$ is a strong power object of $\nabla X$. Therefore $(\nabla, \zeta)$ is a morphism in \textbf{LT}$_{c}$. It is straightforward to prove that if $(F,f):(\C,P)\arr (\mathbb{D},R)$ is a logical morphism, then its unique extension $(\overline{F},\overline{f}):(\C_q,P_q)\arr (\mathbb{D},R)$ is logical. Hence we have that the restriction of \textbf{q} to \textbf{LT}$_{(c )}$ fits into the following diagram
\[
\xymatrix{
\textbf{RD}_{(c )}\ar@/^/[r]^-{\textbf{q}}&\textbf{RD}_{(c,q )}\ar@/^/[l]\ar@{}[l]|{\bot}\\
\textbf{LT}_{(c )}\ar@{^(->}[]+<0ex,2.5ex>;[u]\ar@/^/[r]^-{\textbf{q}}&\textbf{LT}_{(c,q )}\ar@{^(->}[]+<0ex,2.5ex>;[u]\ar@/^/[l]\ar@{}[l]|{\bot}
}
\]
\section{Extensionality}\label{sec4}
The notion of extensionality provides a link between the internal equality predicate of a doctrine and the actual equality of terms seen as morphisms of the base category.
\begin{deff}
A regular doctrine $(\C,P)$ is extensional if for every $A$ in $\C$ and every pairs of parallel morphisms $f,g:X\arr A$, it holds that $$f =g\ \ \ \ \text{if and only if}\ \ \ \ \top_X \le <f,g>^*\delta_A$$
\end{deff}
Given a regular doctrine $(\C,P)$, we denote by $(\C_e,P_e)$ the regular doctrine where $\C_e$ has the same objects of $\C$ and morphism $[f]:X\arr A$ are equivalence classes of morphisms of $\C$ with respect to the equivalence relation  $$f \sim g\ \ \ \ \text{if and only if}\ \ \ \ \top_X \le <f,g>^*\delta_A$$ 
the functor $P_e$ is defined by $P_e(A) = P(A)$ while $[f]^*$ is $f^*$. Authors in \cite{RM} proved that reindexing does not depend on the choice of representatives. $(\C_e,P_e)$ is called the extensional collapse of $(\C,P)$.\\

The morphism $(L,\lambda)_P:(\C,P)\arr (\C_e,P_e)$, where $L(f) = [f]$ and $\lambda$ is the family of identity homomorphisms between infsemilattices, constitutes the unite of the adjunction 
\[
\xymatrix{
\textbf{RD}\ar@/^/[r]^-{\textbf{e}}&\textbf{RD}_{(e)}\ar@/^/[l]\ar@{}[l]|{\bot}
}
\]
%
It is immediate to see that the extensional collapse can be restricted to the category of regular doctrines with comprehensions and quotients and also to its subcategory of triposes; in other words we have the following diagram
\[
\xymatrix{
\textbf{RD}_{(c,q )}\ar@/^/[r]^-{\textbf{e}}&\textbf{RD}_{(c,q,e )}\ar@/^/[l]\ar@{}[l]|{\bot}\\
\textbf{LT}_{(c,q )}\ar@{^(->}[]+<0ex,2.5ex>;[u]\ar@/^/[r]^-{\textbf{e}}&\textbf{LT}_{(c,q,e )}\ar@{^(->}[]+<0ex,2.5ex>;[u]\ar@/^/[l]\ar@{}[l]|{\bot}
}
\]

We conclude the section by proving two lemmas, which require the notions of comprehensions, quotients and extensionality seen so far and which turn out to be useful in the proof of proposition \ref{car2}.
\begin{lem}\label{compi}
In an extensional regular doctrine $(\C,P)$ with full comprehensions and quotients, the following hold:
\begin{itemize}
\item[i)] $\C$ has equalizers
\item[ii)] if $f:X\arr Y$ in $\C$ is mono, then $(f\times f)^*\delta_Y = \delta_X$
\item[iii)] if $\rho$ is an equivalence relation on $A$, then the canonical quotients morphism $q:A\arr A/\rho$ is internally surjective, i.e. $\top_{A/\rho}=\exists_q\top_A$
\end{itemize}
\begin{proof}
i) for every pair of parallel arrows $f,g:A\rightrightarrows B$ in $\C$ the following $$\lfloor <f,g>^*\delta_B \rfloor:E\arr A \rightrightarrows B$$ is an equalizer diagram. The composition is equal because of extensionality. The universal property of equalizers comes from the universal property of comprehensions. ii) Suppose $f:X\arr Y$ is mono. By i) $\C$ has pullbacks, thus consider that the pullback of $f$ along itself is isomorphic to $X$. iii) Suppose $\rho$ is an equivalence relation on $A$; consider the canonical quotient map $q:A\arr A/\rho$ and the diagram
\[
\xymatrix{
A\ar[rd]_-{h}\ar[rr]^-{q}&&A/\rho\\
&X\ar[ru]_-{\lfloor\exists_q\top_A\rfloor}&
}
\] 
where $h$ is the unique arrow which comes from the universal property of comprehensions since $\top_A\le q^*\exists_q\top_A$. We have then $$\rho\le(q\times q)^*\delta_{A/\rho} = (h\times h)^*(\lfloor\exists_q\top_A\rfloor\times \lfloor\exists_q\top_A\rfloor)^*\delta_{A/\rho} = (h\times h)^*\delta_X$$ where the last equality holds by ii) since every comprehension arrow is mono. Thus, by the universal property of quotients, we have a unique morphism $k:A/\rho\arr X$ with $k\circ q=h$. Then the two following diagram commutes
\[
\xymatrix{
&A\ar[dr]^-{q}\ar[dl]_-{q}\ar[d]^-{h}&\\
A/\rho\ar[r]_-{k}&X\ar[r]_-{\lfloor\exists_q\top_A\rfloor}&A/\rho
}
\]and, by universal properties of quotients, we have that $\lfloor\exists_q\top_A\rfloor\circ k = id_{A/\rho}$. Thus $\lfloor\exists_q\top_A\rfloor\circ k\circ\lfloor\exists_q\top_A\rfloor = \lfloor\exists_q\top_A\rfloor$, from which $k\circ \lfloor\exists_q\top_A\rfloor = id_X$ sinse comprehensions are mono. Fullness leads to $\top_{A/\rho} = \exists_q\top_A$.
\end{proof}
\end{lem}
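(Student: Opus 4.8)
The plan is to handle the three items in turn, using extensionality throughout to translate between equality of morphisms and the internal equality predicate $\delta$, and leaning on fullness together with the universal properties of comprehensions and quotients.

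For i), given parallel $f,g:A\arr B$ I would take $E:=\lfloor<f,g>^*\delta_B\rfloor$ and claim it is the equalizer. That the two composites agree is immediate: reindexing along the comprehension sends $<f,g>^*\delta_B$ to $\top_E$, and since $<f\circ\lfloor\cdot\rfloor,g\circ\lfloor\cdot\rfloor>^*\delta_B=\lfloor\cdot\rfloor^*<f,g>^*\delta_B=\top_E$, extensionality forces $f\circ\lfloor\cdot\rfloor=g\circ\lfloor\cdot\rfloor$. For the universal property I would again use extensionality to rewrite the hypothesis $f\circ h=g\circ h$ on a test map $h:Y\arr A$ as $h^*<f,g>^*\delta_B=\top_Y$, whereupon the universal property of comprehensions yields the required unique factorization through $E$. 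Since $\C$ already has binary products, equalizers then give all finite limits, in particular the pullbacks used below.

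For ii), with $f:X\arr Y$ mono I would check that the square with top edge the diagonal $<id_X,id_X>$, left edge $f$, right edge $f\times f$ and bottom edge $<id_Y,id_Y>$ commutes and is cartesian: a cone $a=<a_1,a_2>:T\arr X\times X$, $b:T\arr Y$ with $(f\times f)\circ a=<id_Y,id_Y>\circ b$ forces $f\circ a_1=b=f\circ a_2$, hence $a_1=a_2$ by mono-ness, so the cone factors uniquely through $X$ via $a_1$. This is precisely the assertion that the pullback of $f$ along itself is $X$ embedded diagonally. Feeding this pullback into Beck-Chevalley yields $(f\times f)^*\delta_Y=(f\times f)^*\exists_{<id_Y,id_Y>}\top_Y=\exists_{<id_X,id_X>}f^*\top_Y=\exists_{<id_X,id_X>}\top_X=\delta_X$.

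For iii), set $m:=\lfloor\exists_q\top_A\rfloor:X\arr A/\rho$. The unit $\top_A\le q^*\exists_q\top_A$ of $\exists_q\dashv q^*$ gives, by the universal property of comprehensions, a unique $h:A\arr X$ with $m\circ h=q$. I would then transport the quotient relation along $h$: from $\rho\le(q\times q)^*\delta_{A/\rho}$ and $q=m\circ h$ one obtains $\rho\le(h\times h)^*(m\times m)^*\delta_{A/\rho}=(h\times h)^*\delta_X$, the last equality being ii) applied to the mono $m$. The universal property of quotients now produces $k:A/\rho\arr X$ with $k\circ q=h$; since $m\circ k\circ q=q=id\circ q$, uniqueness in the quotient property gives $m\circ k=id$, and cancelling the mono $m$ in $m\circ k\circ m=m$ gives $k\circ m=id$. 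Thus $m$ is an isomorphism, so $m^*$ is injective; as $m^*\exists_q\top_A=\top_X=m^*\top_{A/\rho}$ (equivalently, by fullness of $m$) we conclude $\exists_q\top_A=\top_{A/\rho}$. The main obstacle lies here: the crux is to realise that the comprehension of $\exists_q\top_A$ is forced to be invertible, which only emerges by combining the identity $(m\times m)^*\delta_{A/\rho}=\delta_X$ of ii) with the universal property of the quotient to manufacture the inverse $k$; items i) and ii) are routine once extensionality and the diagonal Beck-Chevalley square are in place.
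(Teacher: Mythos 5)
Your proposal is correct and takes essentially the same route as the paper: the equalizer of $f,g$ is the comprehension $\lfloor<f,g>^*\delta_B\rfloor$ via extensionality, part ii) comes from Beck--Chevalley applied to the pullback of a mono along itself (the diagonal square), and part iii) constructs $h$ from the comprehension of $\exists_q\top_A$, gets $k$ from the universal property of the quotient, and shows $k$ and $\lfloor\exists_q\top_A\rfloor$ are mutually inverse by the uniqueness clauses. The only (harmless) divergence is the final step of iii), where you conclude $\exists_q\top_A=\top_{A/\rho}$ from the injectivity of reindexing along the isomorphism $m$, whereas the paper invokes fullness of comprehensions; both are valid, and yours arguably sidesteps a subtlety in how the paper's stated form of fullness must be applied there.
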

\begin{lem}\label{strong}
If $(\C,P)$ is an extensional tripos with comprehensions and effective quotients, then it has strong power objects.
\end{lem}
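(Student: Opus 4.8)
The plan is to realise the strong power object as the quotient of a given weak power object by the predicate of ``having the same elements'', and then to transport the membership relation along the quotient map. Fix $A$ and let $\mathbb{P}A$, $\in_A$ be a weak power object of $A$. Write $$\sigma\ :=\ \forall_{<\pi_2,\pi_3>}(<\pi_1,\pi_2>^*\in_A\sse<\pi_1,\pi_3>^*\in_A)$$ for the formula over $\mathbb{P}A\times\mathbb{P}A$ expressing extensional equality of subsets. I would first check that $\sigma$ is an equivalence relation: reflexivity $\delta_{\mathbb{P}A}\le\sigma$ reduces, via $\exists_{<id,id>}\dashv<id,id>^*$, to $\top\le<id,id>^*\sigma=\forall_{\pi_2}(\in_A\sse\in_A)=\top$, while symmetry and transitivity are inherited from those of $\sse$. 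Since the tripos has effective quotients I then obtain $q:\mathbb{P}A\arr Q$ with $\sigma=(q\times q)^*\delta_Q$; the object $Q$ will be the desired power object.

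The heart of the argument is to descend $\in_A$ to a formula $\in'_A$ over $A\times Q$ satisfying $(id_A\times q)^*\in'_A=\in_A$, and I would set $\in'_A:=\exists_{id_A\times q}\in_A$. The verification proceeds in three moves. First, $id_A\times q$ is internally surjective: Beck--Chevalley along the pullback of $q$ against $A\times Q\arr Q$, together with $\exists_q\top=\top_Q$ from Lemma \ref{compi}(iii), yields $\exists_{id_A\times q}\top=\top_{A\times Q}$. Second, by Lemma \ref{compi}(i) the kernel pair of $id_A\times q$ is the comprehension $\lfloor\tau\rfloor$ of the product relation $\tau$ that relates the $A$-coordinates by $\delta_A$ and the $\mathbb{P}A$-coordinates by $\sigma$; here I use that internal equality on a product decomposes as a conjunction and that $(q\times q)^*\delta_Q=\sigma$. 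Third, $\in_A$ is $\tau$-invariant, since from $a\in S$, $\delta_A(a,a')$ and $\sigma(S,S')$ one derives $a'\in S'$ by indiscernibility of $\delta_A$ and by instantiating $\sigma$ at $a'$. Beck--Chevalley for $\exists$ along the kernel-pair square then gives $(id_A\times q)^*\exists_{id_A\times q}\in_A=\exists_{r_1}r_2^*\in_A$, where $r_1,r_2$ are the legs of $\lfloor\tau\rfloor$; restricting along $\lfloor\tau\rfloor$ forces the two pullbacks of $\in_A$ to coincide, so $r_2^*\in_A=r_1^*\in_A$, and Frobenius together with the fact that $r_1$ is split by the diagonal (reflexivity of $\tau$) collapses $\exists_{r_1}r_1^*\in_A$ to $\in_A$.

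With this in hand, $Q$ is a weak power object with membership $\in'_A$: given $\gamma$ over $A\times Y$ and its bracket $\{\gamma\}:Y\arr\mathbb{P}A$ for $\mathbb{P}A$, the morphism $q\circ\{\gamma\}$ satisfies $(id_A\times(q\circ\{\gamma\}))^*\in'_A=(id_A\times\{\gamma\})^*(id_A\times q)^*\in'_A=(id_A\times\{\gamma\})^*\in_A=\gamma$. It then remains to prove the strongness condition $\delta_Q=\sigma'$, where $$\sigma'\ :=\ \forall_{<\pi_2,\pi_3>}(<\pi_1,\pi_2>^*\in'_A\sse<\pi_1,\pi_3>^*\in'_A)$$ is extensional equality for $\in'_A$. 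I would pull both sides back along $q\times q$: Beck--Chevalley for $\forall$ along the square whose horizontal arrows are $id_A\times q\times q$ and $q\times q$ and whose vertical arrows are the projections $<\pi_2,\pi_3>$, combined with $(id_A\times q)^*\in'_A=\in_A$, gives $(q\times q)^*\sigma'=\sigma$, which equals $(q\times q)^*\delta_Q$ by effectiveness. Finally, since $q$ and hence $q\times q$ is internally surjective, Frobenius shows that $(q\times q)^*$ is order-reflecting, so $\sigma'=\delta_Q$, which is precisely the condition making $Q$ a strong power object of $A$.

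I expect the main obstacle to be the second paragraph, namely the descent of $\in_A$ along $id_A\times q$: it is there that effectiveness of quotients, internal surjectivity, and the identification of kernel pairs with comprehensions of equivalence relations (Lemma \ref{compi}) must be combined, and it is the step on which both the weak power object property and the strongness condition ultimately rest.
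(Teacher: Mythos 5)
Your proposal is correct, and the construction is the same as the paper's: quotient the weak power object $\mathbb{P}A$ by the extensional-equality relation (your $\sigma$, the paper's $\Leftrightarrow_A$) and transport membership along the quotient map $q$. Indeed, the paper's membership formula $\text{in}_A =\exists_{<\pi_1,\pi_3>}(<\pi_1,\pi_2>^*\in_A \wedge\ <r\pi_2, \pi_3>^*\delta_{\mathcal{P}(A)})$ is exactly your $\exists_{id_A\times q}\in_A$ written out as a graph formula. Where you diverge is in the verification: the paper never isolates the descent identity $(id_A\times q)^*\text{in}_A=\in_A$, but instead directly computes $(id_A\times \chi_\phi)^*\text{in}_A=\phi$ in three lines, using effectiveness to replace $(r\times r)^*\delta_{\mathcal{P}(A)}$ by $\Leftrightarrow_A$, then the weak power object property of $\mathbb{P}A$ and Frobenius plus reflexivity of $\Leftrightarrow_A$ to peel off the existential. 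Your route through the kernel pair of $id_A\times q$ (identified, via Lemma \ref{compi}(i) and effectiveness, with the comprehension of the product relation $\tau$), the $\tau$-invariance of $\in_A$, and Beck--Chevalley is heavier but buys two things: the reusable descent identity itself, and, more importantly, an explicit proof of the strongness condition $\delta_Q=\forall_{<\pi_2,\pi_3>}(<\pi_1,\pi_2>^*\in'_A\sse<\pi_1,\pi_3>^*\in'_A)$ via Beck--Chevalley for $\forall$ and order-reflection of $(q\times q)^*$ along internally surjective maps --- a verification the paper leaves entirely implicit, even though it is what the word ``strong'' in the statement requires. So your write-up is, if anything, more complete than the paper's; the only standard facts you invoke without proof (the decomposition of $\delta_{A\times B}$ as a conjunction, indiscernibility of $\delta_A$) are routine consequences of Frobenius and Beck--Chevalley and are fair to assume.
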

\begin{proof}
Suppose $\mathbb{P}(A)$ is a weak power object of $A$ in $\C$ and let $\mathcal{P}(A)$ denote the quotient $r:\mathbb{P}(A)\arr \mathcal{P}(A) = \mathbb{P}(A)/\Leftrightarrow_A$
where $$\Leftrightarrow_A = \forall_{<\pi_2,\pi_3>}(<\pi_1,\pi_2>^*\in_A\sse<\pi_1,\pi_3>^*\in_A)$$ For every $B$ and $\phi$ over $A\times B$ we denote by $\chi_\phi$ the morphism $r\circ \{\phi\}$ and by $\text{in}_A$ the following formula over $A\times \mathcal{P}(A)$ $$\text{in}_A =\exists_{<\pi_1,\pi_3>}(<\pi_1,\pi_2>^*\in_A \wedge\ <r\pi_2, \pi_3>^*\delta_{\mathcal{P}(A)})$$ By effectiveness of quotients and internal surjectivity of $r$, we have the following equalities
\begin{equation}\notag
\begin{split}
(id_A\times r\circ\{\phi\})^*\text{in}_A & = \exists_{<\pi_1,\pi_3>}(<\pi_1,\pi_2>^*\in_A \wedge\ <\pi_2, \{\phi\}\circ\pi_3>^*(r\times r)^*\delta_{\mathcal{P}(A)})\\
& = \exists_{<\pi_1,\pi_3>}(<\pi_1,\pi_2>^*\in_A \wedge\ <\pi_2, \{\phi\}\circ\pi_3>^*\Leftrightarrow_A)\\
& = \phi\wedge\exists_{<\pi_1,\pi_3>}<\pi_2, \{\phi\}\circ\pi_3>^*\Leftrightarrow_A\ \ =\ \phi
\end{split}
\end{equation}
\end{proof}
\section{Cauchy-completeness}\label{sec5}
In this section we introduce the definition of cauchy-complete objects in a regular doctrine $(\C,P)$. Recall that, given a regular doctrine $(\C,P)$, a formula $F$ in $P(Y\times A)$ is functional from $Y$ to $A$ if it holds that
\begin{center}
$\top_Y \le \exists_{\pi_2}F$\ \ \ and\ \ \ $<\pi_1,\pi_2>^*F\wedge <\pi_1,\pi_3>^*F\le <\pi_2,\pi_3>^*\delta_A$\end{center}
\begin{deff}\label{cc}
Given a regular doctrine $(\C,P)$, an object $A$ is said cauchy-complete if for every $Y$ and formula $F$ in $P(Y\times A)$ which is functional from $Y$ to $A$
there exists a morphism $f:Y\arr A$ such that $(f\times id_A)^*\delta_A = F$.
\end{deff}
We shall say that a regular doctrine is \textbf{cauchy-complete} if every objects of the base is cauchy-complete.\\\\
\begin{Remark}
The term cauchy-complete is usually introduced in terms of left adjoints. It can be proved that a formula is functional if and only if it is a left adjoint (in the logic of the doctrine). The non trivial part of this is in \cite{ccr}. 
\end{Remark}
\begin{lem}\label{compmono}
In an extensional cauchy-complete regular doctrine $(\C,P)$ with full comprehensions, a morphism $f:A\arr B$ in $\C$ is mono if and only if it is the comprehensions of some formula $\beta$ over $B$.
\end{lem}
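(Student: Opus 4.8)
The statement is a biconditional, and the direction that every comprehension is mono is the easy one, so I would dispose of it first and it needs only the comprehension hypothesis. If $f=\lfloor\beta\rfloor$ and $fh_1=fh_2=:g$ for $h_1,h_2:Y\arr A$, then $g^*\beta=h_i^*f^*\beta=h_i^*\top_A=\top_Y$, so $g$ factors through $\lfloor\beta\rfloor$ and the uniqueness clause of the comprehension forces $h_1=h_2$. The substantial direction is the converse: given $f:A\arr B$ mono, I would exhibit $f$ \emph{itself} as the comprehension of $\beta:=\exists_f\top_A\in P(B)$. The first comprehension requirement is immediate, since the unit $\top_A\le f^*\exists_f\top_A$ of $\exists_f\dashv f^*$ and maximality of $\top_A$ give $f^*\beta=\top_A$. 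What then remains is the universal property: for every $g:Y\arr B$ with $g^*\beta=\top_Y$, a unique $k:Y\arr A$ with $fk=g$. Uniqueness is free because $f$ is mono, so only the \emph{existence} of $k$ has to be produced, and this is precisely where cauchy-completeness will be used.

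Given such a $g$, I would set $F:=(g\times f)^*\delta_B\in P(Y\times A)$, the internal graph relating $y$ to $a$ by ``$g(y)=f(a)$'', and show it is functional from $Y$ to $A$ in the sense recalled before Definition \ref{cc}. Totality, $\top_Y\le\exists_{\pi_2}F$, reduces to the identity $\exists_{\pi_2}(g\times f)^*\delta_B=g^*\exists_f\top_A$, which I would obtain by two applications of Beck--Chevalley after factoring $g\times f$ through $B\times A$ and writing $\delta_B=\exists_{<id_B,id_B>}\top_B$; since $g^*\exists_f\top_A=g^*\beta=\top_Y$ by hypothesis, totality holds. For single-valuedness, $<\pi_1,\pi_2>^*F\wedge<\pi_1,\pi_3>^*F\le<\pi_2,\pi_3>^*\delta_A$, I would rewrite the two conjuncts as $<g\pi_1,f\pi_2>^*\delta_B$ and $<g\pi_1,f\pi_3>^*\delta_B$, use symmetry and transitivity of internal equality to bound their meet by $<\pi_2,\pi_3>^*(f\times f)^*\delta_B$, and finish with $(f\times f)^*\delta_B\le\delta_A$. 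This last inequality is exactly Lemma \ref{compi}(ii) applied to the mono $f$; its proof of this half uses only the equalizers supplied by \ref{compi}(i) and fullness of comprehensions, so it is available here even though quotients are not assumed.

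Once $F$ is known to be functional, Definition \ref{cc} hands me a morphism $k:Y\arr A$ with $(k\times id_A)^*\delta_A=F$, and I would extract $fk=g$ by reindexing this equation along $<id_Y,k>:Y\arr Y\times A$: the left-hand side becomes $<k,k>^*\delta_A=\top_Y$ and the right-hand side becomes $<g,fk>^*\delta_B$, whence $\top_Y\le<g,fk>^*\delta_B$, and extensionality yields $fk=g$. Together with the earlier $f^*\beta=\top_A$ and the uniqueness from monicity, this shows $f$ has the universal property of $\lfloor\beta\rfloor$, completing the argument. I expect the main obstacle to be the verification that $F$ is functional, specifically the Beck--Chevalley computation of $\exists_{\pi_2}F$ and the reduction of single-valuedness to $(f\times f)^*\delta_B\le\delta_A$ (checking that this does not secretly require quotients); after that, cauchy-completeness and extensionality deliver $k$ and the equation $fk=g$ almost formally.
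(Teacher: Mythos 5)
Your proof is correct, and it rests on the same two pillars as the paper's own argument: the formula $\beta=\exists_f\top_A$ and an application of cauchy-completeness to a reindexing of $\delta_B$ along a map of the form $({-}\times f)$, with extensionality converting the resulting internal equality into an actual equality of morphisms. The organization, however, is genuinely different. The paper never verifies the universal property for arbitrary test maps: it forms the comprehension $\lfloor\exists_f\top_A\rfloor:X\to B$ (so the full-comprehension hypothesis is invoked directly), obtains $k:A\to X$ with $\lfloor\beta\rfloor\circ k=f$ from the universal property of $\lfloor\beta\rfloor$, obtains $k':X\to A$ by applying cauchy-completeness to $(\lfloor\beta\rfloor\times f)^*\delta_B$, and concludes that $k$ and $k'$ are mutually inverse, so that $f$ is a comprehension of $\beta$ because it is isomorphic over $B$ to one. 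Your argument subsumes this: the paper's $k'$ is exactly your $k$ for the particular test map $g=\lfloor\beta\rfloor$, and by running the construction for every $g:Y\to B$ with $g^*\beta=\top_Y$ you establish the universal property for $f$ directly, making the isomorphism step unnecessary. What your route costs is also what it buys: you must check functionality of $(g\times f)^*\delta_B$ in general, a point the paper asserts for its special case without proof, and your outline of that check is sound --- the totality computation via two Beck--Chevalley squares (the product-projection square and the square exhibiting $A$ as the pullback of $\Delta_B$ along $id_B\times f$, both of which are pullbacks that exist without appeal to equalizers), and the reduction of single-valuedness to $(f\times f)^*\delta_B=\delta_A$ via symmetry and transitivity of $\delta_B$. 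Your remark that parts (i) and (ii) of Lemma \ref{compi} use only extensionality and full comprehensions, not quotients, is both correct and necessary, since quotients are absent from the present hypotheses. In sum, the paper's version is shorter; yours is more self-contained, fills in the functionality verification the paper leaves implicit, and isolates exactly where monicity of $f$ enters (uniqueness of the factorization, and Lemma \ref{compi}(ii)).
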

\begin{proof} Every comprehension morphism is mono. For the converse, suppose $f:A\arr B$ is mono and consider the formula $\exists_f \top_A$ over $B$. Since $\top_A \le f^*\exists_f\top_A$, by the universal property of comprehensions there exists a morphism $k$ with $\lfloor\exists_f \top_A\rfloor\circ k = f$. Moreover $(\lfloor\exists_f \top_A\rfloor \times f)^*\delta_B$ is functional from the domain of $\lfloor\exists_f \top_A\rfloor$ to $A$, then, by cauchy-completeness, it is the internal graph of a morphism $k'$. By extensionality $k'$ is the unique such a morphism and it is straightforward to see that $k'$ is the inverse of $k$.\end{proof}
After lemma \ref{compmono} we can formulate the following proposition, whose proof can be found in \cite{Jacobs}.
\begin{prop}\label{isosi}
An extensional cauchy-complete regular doctrine $(\C,P)$ with full comprehensions is isomorphic in \textbf{RD} to the subobjects doctrines $(\C,\text{sub})$. 
\end{prop}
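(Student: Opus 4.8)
The goal is to show that an extensional cauchy-complete regular doctrine $(\C,P)$ with full comprehensions is isomorphic in $\textbf{RD}$ to the subobjects doctrine $(\C,\text{sub})$. An isomorphism in $\textbf{RD}$ here is a pair $(F,f)$ where $F$ is an equivalence (in fact an isomorphism) of base categories and $f$ a natural iso of the indexed inf-semilattices commuting with the regular structure. Since both doctrines share the same base $\C$, the plan is to take $F=\text{id}_\C$ and to produce, for each object $A$, an isomorphism of inf-semilattices $f_A: P(A)\arr\text{sub}(A)$ natural in $A$ and compatible with $\exists$, $\wedge$, $\top$ and $\delta$.

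The plan is to define $f_A$ by sending a formula $\alpha\in P(A)$ to the subobject represented by its comprehension $\lfloor\alpha\rfloor:X_\alpha\arr A$; by Lemma \ref{compmono} this is mono, so it is a genuine subobject. First I would check this is well defined on subobjects and order-preserving: if $\alpha\le\beta$ then $\lfloor\alpha\rfloor^*\beta=\top$, so the comprehension $\lfloor\alpha\rfloor$ factors uniquely through $\lfloor\beta\rfloor$, giving $\lfloor\alpha\rfloor\le\lfloor\beta\rfloor$ as subobjects; the converse uses \emph{fullness} of comprehensions, which gives $\alpha\le\beta$ from the factorization. This makes $f_A$ a well-defined order-embedding. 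For surjectivity onto $\text{sub}(A)$ I would invoke Lemma \ref{compmono} directly: every mono $m:B\arr A$ is, up to isomorphism, the comprehension of some $\beta$ over $A$, so every subobject lies in the image of $f_A$; hence $f_A$ is an order-isomorphism, and an order-isomorphism of inf-semilattices automatically preserves finite meets and the top element.

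Next I would verify naturality and preservation of the regular structure. For naturality, given $g:A\arr B$ in $\C$, I would show $g^*\lfloor\beta\rfloor$ (the pullback of the subobject, which exists since Lemma \ref{compi} gives $\C$ pullbacks) agrees with $\lfloor g^*\beta\rfloor$; this is the standard fact that comprehension commutes with reindexing, checked via the defining universal properties. For the existential quantifier, I would check that $f$ maps $\exists_g\alpha$ to the image factorization of $g\circ\lfloor\alpha\rfloor$ in $\text{sub}(B)$, using the Beck--Chevalley and Frobenius conditions of the regular structure together with extensionality to identify internal images with external ones. Finally, preservation of $\delta$: the subobjects reading of $\delta_A$ is the diagonal $\langle\text{id}_A,\text{id}_A\rangle:A\arr A\times A$, and extensionality is exactly what equates $\lfloor\delta_A\rfloor$ with this diagonal as subobjects.

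The main obstacle I expect is the existential-quantifier clause: matching the internal image $\exists_g$ with the epi--mono (image) factorization of the external composite requires that the regular structure of $P$ genuinely computes images of $\C$-morphisms, and this is where cauchy-completeness does the real work — it guarantees that functional formulas are graphs of actual morphisms, so that internally-defined images are represented by honest subobjects. I would isolate this as the key lemma, reducing everything else to the bookkeeping of universal properties already supplied by Lemmas \ref{compi} and \ref{compmono}. Once the regular structure is shown preserved, the pair $(\text{id}_\C, f)$ is the desired isomorphism in $\textbf{RD}$, with inverse given by sending a subobject to the formula $\exists_m\top$ along its representing mono $m$.
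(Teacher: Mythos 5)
The paper does not actually prove this proposition --- it defers entirely to \cite{Jacobs} --- so there is no in-paper argument to compare against; your proposal reconstructs what is, in essence, the standard proof that the citation points to, and it is correct in outline. The map $\alpha\mapsto\lfloor\alpha\rfloor$ is an order-embedding by fullness, it is surjective onto $\text{sub}(A)$ by Lemma \ref{compmono} (which is exactly where cauchy-completeness is consumed), it commutes with reindexing because $\lfloor g^*\beta\rfloor$ and the pullback of $\lfloor\beta\rfloor$ along $g$ satisfy the same universal property, and its inverse $m\mapsto\exists_m\top$ is the same formula that drives the proof of Lemma \ref{compmono}.

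Two corrections to your bookkeeping. First, your appeal to Lemma \ref{compi} for pullbacks is not literally licensed: that lemma assumes quotients, which the present proposition does not supply. This is harmless --- the equalizer construction in its proof, namely $\lfloor <f,g>^*\delta_B\rfloor$, uses only extensionality and comprehensions, so $\C$ does have equalizers and hence pullbacks under the current hypotheses --- but you should say this rather than cite the lemma as a black box. Second, the step you single out as the main obstacle, matching $\exists_g$ with image factorization, is easier than you fear, and cauchy-completeness is not where the work happens there. Since comprehensions are full, $\exists_{\lfloor\alpha\rfloor}\top = \alpha$, hence $\exists_g\alpha = \exists_{g\circ\lfloor\alpha\rfloor}\top$; and for an arbitrary morphism $h$ the subobject $\lfloor\exists_h\top\rfloor$ is the least one through which $h$ factors (if $h$ factors through a mono, write that mono as $\lfloor\gamma\rfloor$ using Lemma \ref{compmono}, deduce $\top\le h^*\gamma$, and conclude $\exists_h\top\le\gamma$ by adjunction). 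This simultaneously shows that $\C$ has images and that they are computed by the internal $\exists$. Alternatively, once $f$ is a natural order-isomorphism one can simply transfer the left adjoints of $P$ along $f$ to equip $\text{sub}$ with a regular structure; uniqueness of adjoints between posets then makes commutation with $\exists$ (and with $\delta_A = \exists_{<id_A,id_A>}\top_A$) automatic. So the real work of cauchy-completeness is confined to Lemma \ref{compmono}, which you had already invoked for surjectivity.
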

Given a regular doctrine $(\C,P)$ we shall denote the free cauchy-complete regular doctrine on $(\C,P)$ by $(\C_l,P_l)$. $\C_l$ has the same objects as $\C$, while morphisms from $A$ to $B$ are functional formulas from $A$ to $B$: identities are internal equalities and the composition is the usual composition of formulas, i.e. if $\phi$ is in $P(A\times B)$ and $\psi$ in $P(B\times C)$, the composition $\psi\circ\phi$ is the formula of $P(A\times C)$ defined by $$\exists_{<\pi_1,\pi_3>} ( <\pi_1,\pi_2>^*\phi\wedge <\pi_2,\pi_3>^*\psi)$$ The action of the functor $P_l$ is defined by the following assignments: $P_l(A) = P(A)$ and $P_l(\phi):P_l(B)\arr P_l(A)$ is the functor which maps every formula $\beta$ in $P_l(B)$ in the following formula of $P_l(A)$ $$\exists_{\pi_1} (\phi\wedge \pi_2^*\beta)$$ Frobenius reciprocity and the fact that $\phi$ is functional ensure that $P_l(\phi)$ is an homomorphism of inf-semilattices. Moreover the assignment $$\exists_\phi \alpha = \exists_{\pi_2} (\phi \wedge \pi_1^*\alpha)$$ produces a left adjoint to $P_l(\phi)$ which satisfies Beck-Chevally condition and Frobenius-Reciprocity.\\

The morphism $(\Gamma,\gamma)_P: (\C,P)\arr (\C_l,P_l)$ where $\Gamma$ is the functor from $\C$ to $\C_l$ which maps every morphism $f:A\arr B$ of $\C$ to the formula $\Gamma f = (f\times id_B)^*\delta_B$ and $\gamma$ is the family of identity homomorphisms of inf-semilattices, constitutes the unite of the following adjunction
\[
\xymatrix{
\textbf{RD}_{(e )}\ar@/^/[r]^-{\textbf{l}}&\textbf{RD}_{(e,l )}\ar@/^/[l]\ar@{}[l]|{\bot}
}
\]
where \textbf{RD}$_{(e,l )}$ is the full subcategory of \textbf{RD}$_{(e )}$ on extensional cauchy-complete regular doctrines.\\\\
In fact, for every morphism $(F,f):(\C,P)\arr (\D,R)$ in \textbf{RD}$_{(e)}$, where $(\D,R)$ is cauchy-complete, define a morphism $(\overline{F},\overline{f}): (\C_l, P_l)\arr (\D,R)$ where $\overline{f} = f$ and the functor $\overline{F}:\C\arr \C_l$ is given by the assignment
$$\xymatrix@R=1.5ex@C=1.8ex{
A\ar[dd]_-\phi&&FA\ar[dd]^-{\phi_F}\\
&\mapsto&\\
B&&FB
}$$ in which $\phi_F$ is the morphism of $\D$ such that its internal graph is equal to $(<F\pi_A,F\pi_B>^{-1})^*f_{A\times B}\phi$. Since $\phi$ is functional from $A$ to $B$ and $f_{A\times B}$ preserves the regular structures, $(<F\pi_A,F\pi_B>^{-1})^*f_{A\times B}\phi$ is functional from $FA$ to $FB$. Cauchy-completeness and extensionality of $(\D,R)$ ensure that the formula is the graph of a unique morphism $\phi_F$.\\\\
Moreover $(\overline{F}, \overline{f})$ uniquely makes the following diagram commute$$
\xymatrix@R=4ex@C=3.6ex{
\C^{op}\ar[rd]^-{P}\ar@/_1pc/[ddr]_{(\Gamma,\gamma)}\ar@/^/[rr]^-{(F,f)}&&\mathbb{D}^{op}\ar[dl]^-{R}\\
&\textbf{ISL}&\\
&\C_l^{op}\ar[u]^-{P_l}\ar@/_1pc/[ruu]_-{(\overline{F},\overline{f})}&
}
$$
This is due to the fact that $\overline{F}\circ \Gamma$ is trivially equal to $F$ on objects and if $h:A\arr B$ is a morphism in $\C$, then $\overline{F}(\Gamma (h))$ is the morphism of $\D$ whose graph is $(<F\pi_A,F\pi_B>^{-1})^*f_{A\times B}(h\times id_B)^*\delta_B$ but $$f_{A\times B}(h\times id_B)^*\delta_B = F(h\times id_{FB})^*f_{B\times B}\delta_B = <F\pi_A,F\pi_B>^*(Fh\times id_{FB})^*\delta_{FB} $$ therefore $(<F\pi_A,F\pi_B>^{-1})^*f_{A\times B}\Gamma(h)$ is the graph of $Fh$ and by extensionality we have $\overline{F}(\Gamma (h)) = Fh$.\\

The following lemma is instrumental to prove that the previous adjunction restricts to categories of richer doctrines.
\begin{lem}
Given a regular doctrine $(\C,P)$
\begin{itemize}
\item[i)] if $(\C,P)$ has full comprehensions, so has $(\C_l,P_l)$
\item[ii)] if $(\C,P)$ has effective quotients, so has $(\C_l,P_l)$
\item[iii)] if $(\C,P)$ is a tripos, so is $(\C_l,P_l)$
\end{itemize}
\end{lem}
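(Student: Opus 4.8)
The unifying remark is that the unit $(\Gamma,\gamma)_P$ presents $\C$ inside $\C_l$ as the wide subcategory of graphs. Since $\gamma$ is the family of identity homomorphisms we identify $P_l(A)$ with $P(A)$, and since $(\Gamma,\gamma)_P$ is a morphism of \textbf{RD}, the functor $\Gamma$ preserves binary products (so the product of $A$ and $B$ in $\C_l$ is again $A\times B$, with projections $\Gamma\pi_1,\Gamma\pi_2$) and satisfies $P_l(\Gamma f)=f^*$, $\delta^{P_l}=\delta^{P}$, together with preservation of $\exists$. Under these identifications a formula over $A$, an equivalence relation on $A$, a weak power object $\mathbb{P}A$ and the predicate $\in_A$ are literally the same data in $P$ and in $P_l$; moreover $(\C_l,P_l)$ is extensional and cauchy-complete, so that a morphism is determined by its graph and graphs of monos (resp. of internally surjective maps) are monic (resp. epic) in $\C_l$. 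The strategy for i) and ii) is then to transport the relevant universal morphism of $\C$ along $\Gamma$, while iii) must be argued directly.

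For i), I claim that $\Gamma\lfloor\alpha\rfloor$ is the comprehension of $\alpha$ in $\C_l$. The equation $P_l(\Gamma\lfloor\alpha\rfloor)(\alpha)=\lfloor\alpha\rfloor^*\alpha=\top$ and fullness are immediate from $P_l(\Gamma\lfloor\alpha\rfloor)=\lfloor\alpha\rfloor^*$. For the universal property, given a functional $\phi\colon Y\arr A$ with $P_l(\phi)(\alpha)=\top_Y$, I would set $h=(id_Y\times\lfloor\alpha\rfloor)^*\phi$ in $P(Y\times X)$; functionality of $h$ follows from that of $\phi$ together with $\lfloor\alpha\rfloor$ being mono, and the factorization $\Gamma\lfloor\alpha\rfloor\circ h=\phi$ reduces, by Frobenius, Beck--Chevalley and the identity $\exists_{\lfloor\alpha\rfloor}\top=\alpha$ (a consequence of full comprehension), to the inequality $\phi\le\pi_2^*\alpha$, which functionality and $P_l(\phi)(\alpha)=\top_Y$ provide. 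Uniqueness of $h$ holds because $\Gamma\lfloor\alpha\rfloor$ is monic in $\C_l$.

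For ii), the three axioms of an equivalence relation are stated in terms of $\delta$ and of reindexings along projections, all of which coincide in $P$ and $P_l$; hence an equivalence relation $\rho$ on $A$ in $\C_l$ is already one in $\C$, and I would take $\Gamma q\colon A\arr A/\rho$ for $q$ the effective quotient of $\C$. Rewriting $\Gamma q\times\Gamma q=\Gamma(q\times q)$, both the compatibility and the effectiveness $\rho=(\Gamma q\times\Gamma q)^*\delta_{A/\rho}$ transfer verbatim from $\C$. For the universal property I would descend a compatible functional $\phi\colon A\arr Y$ to $h=\exists_{q\times id_Y}\phi$ in $P(A/\rho\times Y)$: single-valuedness of $h$ comes from single-valuedness of $\phi$ and compatibility, while totality of $h$ reduces to internal surjectivity of $q$ (as in Lemma \ref{compi}); the factorization $h\circ\Gamma q=\phi$ and uniqueness then follow as in i), using that $\Gamma q$ is epic in $\C_l$.

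For iii) the weak power objects transfer directly: taking $\mathbb{P}A$ and $\in_A$ as in $\C$ and, for $\gamma$ over $A\times Y$, setting $\{\gamma\}_{\C_l}=\Gamma\{\gamma\}_{\C}$, the classifying identity $(id_A\times\{\gamma\})^*\in_A=\gamma$ is just the instance $P_l(\Gamma(id_A\times\{\gamma\}_\C))=(id_A\times\{\gamma\}_\C)^*$ of the remark above. The substantial point, and the step I expect to be the main obstacle, is the remaining first-order structure: one must show that for an \emph{arbitrary} functional $\phi$ the reindexing $P_l(\phi)(\beta)=\exists_{\pi_1}(\phi\wedge\pi_2^*\beta)$ is a homomorphism of Heyting algebras (in particular preserves $\imply$) and admits a right adjoint $\forall_\phi$, built as for genuine maps out of $\forall$ and $\imply$ of $\C$, satisfying Beck--Chevalley. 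Unlike i) and ii) this cannot be reduced to transport along $\Gamma$, and has to be deduced directly from the two functionality conditions on $\phi$ (totality and single-valuedness); this is precisely what forces substitution along a functional relation to behave like substitution along an honest map.
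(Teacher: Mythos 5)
Your parts i) and ii) are correct and are essentially the paper's own proof: the paper likewise takes $\Gamma\lfloor\alpha\rfloor$ as the comprehension, with the same factorization $(id_Y\times\lfloor\alpha\rfloor)^*F$ justified by the identity $\exists_{\lfloor\alpha\rfloor}\top_X=\alpha$ coming from fullness, and likewise takes $\Gamma q$ as the quotient, descending $F$ to the relational composite of $F$ with $\Gamma q$, which is the same formula (up to the standard graph identities) as your $\exists_{q\times id_Y}\phi$. If anything you are more explicit than the paper, which never addresses uniqueness of the factorizations, effectiveness of $\Gamma q$, or the fact that totality of the descended arrow needs internal surjectivity of $q$ (Lemma \ref{compi}(iii), hence hypotheses available in the context where the lemma is used). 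One caveat: your justification that $\Gamma\lfloor\alpha\rfloor$ is monic in $\C_l$ ``since $(\C_l,P_l)$ is extensional and cauchy-complete'' is not a proof. What actually makes the graph of a mono $m$ monic in $\C_l$ is internal injectivity, $(m\times m)^*\delta_A=\delta_X$, and this follows from Beck--Chevalley applied to the pullback of the mono $m\times m$ along itself (an identity square), together with Frobenius --- not from any property of $(\C_l,P_l)$.

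The genuine gap is in iii). You transfer the weak power objects correctly (this is exactly what the paper records immediately after the lemma), but you explicitly leave open what you call the main obstacle: that for an arbitrary functional $\phi$ the reindexing $P_l(\phi)(\beta)=\exists_{\pi_1}(\phi\wedge\pi_2^*\beta)$ is a homomorphism of Heyting algebras and has a right adjoint satisfying Beck--Chevalley. This is precisely the part the paper dismisses as straightforward, and the one missing idea that makes it so is that functionality forces the existential and universal transposes to coincide:
$$\exists_{\pi_1}(\phi\wedge\pi_2^*\beta)\ =\ \forall_{\pi_1}(\phi\imply\pi_2^*\beta).$$
The inequality $\le$ follows from single-valuedness of $\phi$ together with substitutivity of $\delta_B$ (that is, $\delta_B\wedge\pi_1^*\beta\le\pi_2^*\beta$) and Beck--Chevalley for the projection pullback; the inequality $\ge$ follows from totality of $\phi$ together with Frobenius and the counit of $\forall_{\pi_1}$. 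Granting this, iii) becomes formal: the existential form shows that $P_l(\phi)$ preserves $\bot$ and $\lor$, the universal form shows that it preserves $\top$, $\wedge$ and $\imply$, so it is a Heyting homomorphism; moreover $\forall_\phi(\alpha)=\forall_{\pi_2}(\phi\imply\pi_1^*\alpha)$ is then right adjoint to $P_l(\phi)$, dually to the paper's verification that $\exists_\phi(\alpha)=\exists_{\pi_2}(\phi\wedge\pi_1^*\alpha)$ is left adjoint, and Beck--Chevalley for these right adjoints follows from Beck--Chevalley for the left adjoints by passing to mates. So your plan for iii) is the right one, but as written it stops exactly where the proof has to start.
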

\begin{proof}
i) suppose $\alpha$ is a formula in $P_l(A) = P(A)$, since $(\C,P)$ has full comprehensions, there exists the morphism $\lfloor\alpha\rfloor:X\arr A$ in $\C$ with $\lfloor\alpha\rfloor^*\alpha = \top_X$. Then $\Gamma \lfloor\alpha\rfloor$ is a morphism from $X$ to $A$ in $\C_l$ such that $P_l(\Gamma \lfloor\alpha\rfloor)\alpha = \top_X$. Note that, since comprehension are full in $(\C,P)$, we have that $\exists_{\lfloor\alpha\rfloor}\top_X = \alpha$, thus for every formula of $(\C_l,P_l)$, which is functional from $Y$ to $A$ with $P_l(F)\alpha = \top_Y$, $(id_Y\times \lfloor\alpha\rfloor)^*F$ is functional from $Y$ to $X$, moreover the composition $\Gamma \lfloor\alpha\rfloor\circ (id_Y\times \lfloor\alpha\rfloor)^*F$ is equal to $F$. ii) If $q:A\arr A/\rho$ is the quotient in $(\C,P)$ of an equivalence relation $\rho$ over $A$, then $\Gamma q : A\arr A/\rho$ is the quotient of $\rho$ in $(\C_l,P_l)$, in fact, for every functional formula $F$ from $A$ to $B$ with $\rho \le (F\times F)^*\delta_B$, we have that $\xi = \exists_{<\pi_2,\pi_3>}(<\pi_1,\pi_2>^*\Gamma q \wedge <\pi_1,\pi_3>^*F)$ is functional from $A/\rho$ to $B$ with $\xi \circ \Gamma q = F$. iii) Straightforward.
\end{proof}
Note that that $\Gamma \mathbb{P}(A)$ is exactly $\mathbb{P}(A)$ and the same holds for $\delta_{\Gamma \mathbb{P}(A)}$, since $\gamma$ is a family of identitiy morphisms. Moreover a power object of $A$ in $(\C,P)$ is also a power objects of $A$ in $(\C_l,P_l)$, in fact for an object $\phi$ over $A\times B$, the morphism $\Gamma \{\phi\}$ is such that $P_l(id_A\times \Gamma \{\phi\})\in_A = \phi$. A consequence of this is that the unite $(\Gamma, \gamma)$ preserves power objects. Moreover in $(F,f)$ is a logic morphism, its unique extension $(\overline{F}, \overline{f})$ is logic too. Then the left adjoint functor $\textbf{l}: \textbf{RD}_{(e )} \arr \textbf{RD}_{(e,l )}$ restricts to the following commutative diagram
\[
\xymatrix{
\textbf{RD}_{(c,q,e)}\ar@/^/[r]^-{\textbf{l}}&\textbf{RD}_{(c,q,e,l)}\ar@/^/[l]\ar@{}[l]|{\bot}\\
\textbf{LT}_{(c,q,e)}\ar@{^(->}[]+<0ex,2.5ex>;[u]\ar@/^/[r]^-{\textbf{l}}&\textbf{LT}_{(c,q,e,l)}\ar@{^(->}[]+<0ex,2.5ex>;[u]\ar@/^/[l]\ar@{}[l]|{\bot}
}
\]
\section{The Tripos to Topos construction}\label{sec6}
Gluing together the diagrams obtained in the previous sections we have\\
\[
\xymatrix{
\textbf{RD}\ar@/^/[r]^-{\textbf{c}}&\textbf{RD}_{(c )}\ar@/^/[l]\ar@{}[l]|{\bot}\ar@/^/[r]^-{\textbf{q}}&\textbf{RD}_{(c,q )}\ar@/^/[l]\ar@{}[l]|{\bot}\ar@/^/[r]^-{\textbf{e}}&\textbf{RD}_{(c,q,e )}\ar@/^/[l]\ar@{}[l]|{\bot}\ar@/^/[r]^-{\textbf{l}}&\textbf{RD}_{(c,q,e,l)}\ar@/^/[l]\ar@{}[l]|{\bot}\\
\textbf{LT}\ar@{^(->}[]+<0ex,2.5ex>;[u]\ar@/^/[r]^-{\textbf{c}}&\textbf{LT}_{(c )}\ar@{^(->}[]+<0ex,2.5ex>;[u]\ar@/^/[l]\ar@{}[l]|{\bot}\ar@/^/[r]^-{\textbf{q}}&\textbf{LT}_{(c,q )}\ar@{^(->}[]+<0ex,2.5ex>;[u]\ar@/^/[l]\ar@{}[l]|{\bot}\ar@/^/[r]^-{\textbf{e}}&\textbf{LT}_{(c,q,e )}\ar@{^(->}[]+<0ex,2.5ex>;[u]\ar@/^/[l]\ar@{}[l]|{\bot}\ar@/^/[r]^-{\textbf{l}}&\textbf{LT}_{(c,q,e,l)}\ar@{^(->}[]+<0ex,2.5ex>;[u]\ar@/^/[l]\ar@{}[l]|{\bot}
}
\]
\\It has been remarked in \cite{RM2} that the action on objects of the functor from \textbf{LT} to \textbf{RD}$_{(c,q,e,l)}$ is, up to equivalence, the tripos to topos construction of Hyland-Johnstone and Pitts \cite{tripinret}.
In this section we show that the category \textbf{LT}$_{(c,q,e,l)}$ is equivalent to $\mathcal{L}\bf{Topos}$, the category of elementary toposes and logic functors. 
\begin{prop}\label{car2}
A non-empty category $\C$ with binary products is an elementary topos if and only if there exists an cauchy-complete extensional tripos $(\C,P)$ with full comprehensions and effective quotients.
\end{prop}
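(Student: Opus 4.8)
The plan is to prove the two implications separately, using the subobjects doctrine $(\C,\text{sub})$ as the bridge between the two worlds. For the implication from topoi to triposes, suppose $\C$ is an elementary topos and set $P=\text{sub}$, the functor sending each object to its poset of subobjects. First I would recall the standard topos-theoretic facts that make $(\C,\text{sub})$ a regular doctrine: pullback along $f$ has both adjoints $\exists_f\dashv f^*\dashv\forall_f$ satisfying Beck--Chevalley and Frobenius, and each $\text{sub}(A)$ is a Heyting algebra. The weak (indeed strong) power objects are provided by $\mathbb{P}A=\Omega^A$ together with the membership subobject, using the subobject classifier $\Omega$. Then I would verify the four remaining properties, each of which is routine: comprehension of a formula is the mono representing it, and fullness holds because the order on formulas is exactly the subobject order; extensionality holds because $\ple{f,g}$ factors through the diagonal $\delta_A$ iff $f=g$; effectiveness of quotients is exactness of the topos; and cauchy-completeness is the statement that every total single-valued relation is the graph of a morphism, which holds in any topos.

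For the converse, let $(\C,P)$ be a cauchy-complete extensional tripos with full comprehensions and effective quotients. By Proposition \ref{isosi} the doctrine is isomorphic in \textbf{RD} to $(\C,\text{sub})$, so I may assume that formulas over $A$ are subobjects of $A$ and that $\delta_A$ is the diagonal. The goal is then to exhibit the data of an elementary topos on $\C$, for which I would invoke the standard characterization that a category with finite limits and power objects is a topos. Binary products are given; Lemma \ref{compi} (i) supplies equalizers; and Lemma \ref{strong} upgrades the weak power objects to strong ones. The key observation is that, in the extensional doctrine $(\C,\text{sub})$, a strong power object is exactly a power object in the topos sense: the weak universal property gives surjectivity of the assignment $\gamma\mapsto\{\gamma\}$, while the defining equation for $\delta_{\mathbb{P}A}$ together with extensionality gives its injectivity, since $\ple{f,g}^*\delta_{\mathbb{P}A}=\top$ forces $f$ and $g$ to classify the same subobject of $A\times Y$.

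It remains to produce a terminal object. Here I would take any object $A$ (the base is non-empty) and comprehend the formula $\theta=\forall_{\pi_2}\in_A$ over $\mathbb{P}A$, which internally asserts that a given subobject is all of $A$; set $\lfloor\theta\rfloor:T\arr\mathbb{P}A$. The classifying map of $\top_{A\times X}$ factors through $\lfloor\theta\rfloor$ by Beck--Chevalley, giving a map $X\arr T$ for every $X$; conversely any map $X\arr T$ composes to a map $X\arr\mathbb{P}A$ landing in $\theta$, and since $\pi_2^*\forall_{\pi_2}\beta\le\beta$ a fibrewise-full subobject is full, so this map classifies $\top_{A\times X}$ and hence, by the bijective correspondence just established and the fact that $\lfloor\theta\rfloor$ is mono, the map $X\arr T$ is unique. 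Thus $T$ is terminal, $\C$ has finite limits, and the characterization yields that $\C$ is an elementary topos.

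I expect the main obstacle to be the converse direction, and within it two points deserve care: first, checking precisely that the strong-power-object equation combined with extensionality delivers the full (bijective, and natural in $Y$) universal property of a topos power object, rather than merely the weak existence statement; and second, the construction of the terminal object, which is the only piece of finite-limit structure not already handed to us by the hypotheses and Lemma \ref{compi}. Once these are in place, the appeal to the standard ``finite limits plus power objects'' characterization of elementary topoi closes the argument.
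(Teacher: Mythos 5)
Your proof is correct, and its skeleton matches the paper's: the subobjects tripos $(\C,\text{sub})$ handles the forward direction, and the converse is reduced to exhibiting finite limits and power objects. You differ in two places, both in the converse. First, the terminal object: the paper takes any object $A$ and forms the quotient $A\arr A/\top_{A\times A}$; effectiveness and Lemma \ref{compi}(iii) make the top formula over $Y\times A/\top_{A\times A}$ functional, so cauchy-completeness plus extensionality produce the unique map from any $Y$. You instead comprehend $\forall_{\pi_2}\in_A$ in the power object and use uniqueness of classifying maps; this works (Beck--Chevalley gives the factorization, and the counit $\pi_2^*\forall_{\pi_2}\beta\le\beta$ together with uniqueness of classifying maps and monicity of the comprehension gives uniqueness), but note that it leans on the tripos structure ($\forall$ and power objects), whereas the paper's construction uses only quotients, cauchy-completeness and extensionality, and therefore works verbatim for regular doctrines in $\textbf{RD}_{(c,q,e,l)}$ --- which matters for the paper's later remark relating Frey's construction, carried out at the regular level, to this one. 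One caution on your version: the $\mathbb{P}A$ there must be the \emph{strong} power object $\mathcal{P}A$ supplied by Lemma \ref{strong}; with only a weak one, classifying maps need not be unique and your uniqueness argument for $X\arr T$ collapses. Second, the power objects: the paper draws the explicit pullback square relating $\lfloor\phi\rfloor$, $\lfloor\text{in}_A\rfloor$ and $id_A\times\chi_\phi$, and then invokes Lemma \ref{compmono} so that every mono is classified; you route through Proposition \ref{isosi} to identify $(\C,P)$ with $(\C,\text{sub})$ outright and read off the bijection between subobjects of $A\times Y$ and maps $Y\arr\mathcal{P}A$. These are essentially the same argument (Proposition \ref{isosi} rests on Lemma \ref{compmono}), and your packaging is arguably tidier; indeed it is the same device the paper itself uses immediately after the proposition to get essential surjectivity of the equivalence with $\mathcal{L}\textbf{Topos}$.
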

\begin{proof}
Suppose $\C$ is an elementary topos, then consider its subobjects tripos $(\C,\text{sub})$. In a suobjects doctrine every formula is its own full comprehension. Effective quotients comes from exactness of $\C$. Every functional formula is the graph of a unique morphism \cite{Jacobs}, then $(\C,\text{sub})$ is also cauchy-complete and extentional.\\\\
Conversely, if $(\C,P)$ is an object of \textbf{LT}$_{(c,q,e,l)}$, then $\C$ has\\
\\
(finite limits): $\C$ has binary products and equalizers by  lemma \ref{compi}. Since $\C$ is not empty, it has an object $A$. Consider the quotient $$A\arr A/\top_{A\times A}$$for every $Y$ in $\C$ the the top element of $P(Y\times A/\top_{A\times A})$ is functional from $Y$ to $A/\top_{A\times A}$ then by cauchy-completeness it is the internal graph of a morphism $Y\arr A/\top_{A\times A}$ in $\C$. Its uniqueness comes from extentionality.\\
\\
(power objects): by lemma \ref{strong} we know that $(\C,P)$ has strong power objects. Using the notation of \ref{strong}, we denote by $\mathcal{P}(A)$ the strong power objects of $A$. Given a formula $\phi$ over $A\times B$, the following diagram is a pullback 
\[
\xymatrix{
X\ar[rr]\ar[d]_-{\lfloor\phi\rfloor}&&\epsilon_A\ar[d]^-{\lfloor\text{in}_A\rfloor}\\
A\times B\ar[rr]_-{id_A\times \chi_\phi}&&A\times \mathcal{P}(A)
}
\]
where $\epsilon_A$ denotes the domain of the comprehension of $\text{in}_A$. By extentionality and fullness of comprehensions, $\chi_\phi$ is the unique such function. Apply lemma \ref{compmono} to see that $\mathcal{P}(A)$ is a power object of $A$ also in the topos theoretic sense.
\end{proof}
Thus the functor from {$\mathcal{L}\bf{Topos}$ to \textbf{LT}$_{(c,q,e,l)}$ determined by the assigment 
$$\xymatrix@R=1.5ex@C=1.8ex{
\C\ar[dd]_-{F}&&(\C,\text{sub})\ar[dd]^-{(F,F)}\\
&\mapsto&\\
\D&&(\D,\text{sub})
}$$
is an equivalence. It is clearly faithful. It is also full, since if $(G,g):(\C, \text{sub}) \arr (\D, \text{sub})$ is a morphism in \textbf{LT}$_{(c,q,e,l)}$ and $\xi$ a formula over $A$, then the comprehension of $G\xi$ is $G\xi$ itself and since $(G,g)$ preserves comprehensions we have that $G\xi$ is isomorphic to the comprehension of $g(\xi)$, which in turn is $g(\xi)$ itself, then $(G,g)$ is $(G,G)$. Essential surjectiveness comes from \ref{isosi} and  \ref{car2}.
\begin{Remark}
Denoting by \textbf{RT} the full subcategory of \textbf{RD} on triposes whose bases have finite products, it has been proved in \cite{frey} the existence of the following adjointness situations 
\[
\xymatrix{
\textbf{RT}\ar@/^/[r]^-{\mathcal{F}_1}&\textbf{Q\ }\ar@/^/[l]\ar@{}[l]|{\bot}\ar@/^/[r]^-{\ \ \mathcal{F}_2}&\mathcal{R}\textbf{Topos}\ar@/^/[l]\ar@{}[l]|{\bot\ \ }
}
\]
where \textbf{Q} is the category of q-toposes and regular functors, i.e. functors preserving left adjoints (definitions are in \cite{frey}) and $\mathcal{R}$\textbf{Topos} is the full subcategory on toposes of \textbf{Reg} (the category of regular categories and regular functors). It can be proved, by arguments similar to those in \ref{car2}, that \textbf{Q} is equivalent to \textbf{RT}$_{(c,q,e)}$ and that \textbf{RT}$_{(c,q,e,l)}$ is equivalent to $\mathcal{R}$\textbf{Topos}. Thus (up to equivalences) $\mathcal{F}_1$ is the restriction of \textbf{eqc} to \textbf{RT} and $\mathcal{F}_2$ is the restriction of \textbf{l} to \textbf{RT}$_{(c,q,e)}$.
\end{Remark}
\begin{Remark}
As we pointed out in lemma \ref{strong}, a tripos in \textbf{LT}$_{(c,q,e,l)}$ has strong power objects and these are basically obtained as quotients of weak power objects with respect to suitable equivalence relations. A sort of converse lemma can also be proved: an extensional cauchy-complete tripos $(\C,P)$ with full comprehensions and strong power objects has effective quotients. Given an object $A$ and an equivalence relation $\rho$ on $A$, define $A/\rho$ to be the domain of the following formula over $\mathbb{P}A$
$$\sigma = \exists_{\pi_1}( \in_A\wedge\ \forall_{<\pi_1,\pi_3>}(<\pi_1,\pi_2>^*\rho \leftrightarrow <\pi_2,\pi_3>^*\in_A))$$If $f:A\arr Y$ is such that $\rho\le(f\times f)^*\delta_Y$ then$$\exists_{<\pi_2,\pi_3>}(<\pi_1,\pi_2>^*(id_A\times \lfloor\sigma\rfloor)^*\in_A\wedge\ <\pi_1,\pi_2>^*(id_A\times f)^*\delta_Y)$$ is functional from $A/\rho$ to $Y$, and by extensionality and cauchy-completeness, it is the graph of the unique desired morphism.\\\\
Denote by \textbf{LT$_s$} the subcategory on \textbf{LT} of triposes with strong power objects, then we have the equivalence $\textbf{LT$_{s(c,e,l)}$} \equiv \textbf{LT}_{(c,q,e,l)}$. It has been shown by Carboni in \cite{carbons} the existence of the following adjoint situation
\[
\xymatrix{
\textbf{LT$_s$}\ar@/^/[r]^-{\mathcal{F}}&\mathcal{L}\textbf{Topos}\ar@/^/[l]\ar@{}[l]|{\bot\ \ }
}
\]
It is immediate to prove that the reflector $\mathcal{F}$ is the restriction of \textbf{lec} to \textbf{LT$_s$}.\end{Remark}

\bibliography{biblio}
\bibliographystyle{plain}

\end{document}